\newtheorem{theorem}{Theorem}[section]
\newtheorem{lemma}[theorem]{Lemma}
\newcommand{\Dart}{{\rm D}}
\newcommand{\V}{{\mathcal{V}}}
\newcommand{\D}{{\mathcal{D}}}
\newcommand{\dAAG}{\mathrm {A^2D}}
\begin{document}

\title[Connectedness]{Connectedness of the dart digraph and the squared-dart digraph}

\author[P.\ Poto\v{c}nik]{Primo\v{z} Poto\v{c}nik}
\address{Primo\v{z} Poto\v{c}nik,\newline
Faculty of Mathematics and Physics, University of Ljubljana, \newline Jadranska 19, SI-1000 Ljubljana, Slovenia;
\newline also affiliated with: \newline 
 IMFM,
 Jadranska 19, SI-1000 Ljubljana, Slovenia.
 } 
\email{primoz.potocnik@fmf.uni-lj.si}

\thanks{Supported in part by the Slovenian Research Agency, projects J1-5433, J1-6720, and P1-0294}


\author[S.\ Wilson]{Steve Wilson}
\address{Steve Wilson,\newline
 Northern Arizona University, Department of Mathematics and Statistics, \newline
 Box 5717, Flagstaff, AZ 86011, USA
}
 \email{stephen.wilson@nau.edu}

\subjclass[2000]{20B25}
\keywords{digraph, graph, transitive, product}

\begin{abstract}
In this note we revisit the
 {\it dart graph} and {\it the squared dart digraph} constructions
 and prove that they yield strongly connected digraphs  when 
applied to connected graphs of minimum valence at least $3$.
\end{abstract}

\maketitle


\section{Introduction}
\label{sec:intro}

In \cite{HC} and \cite[Section 4]{PW}, two constructions, called a {\it dart digraph} and a {\it squared dart digraph}, were introduced.  The second of these is a directed form of a graph introduced in  \cite{PSV}.
The purpose of this note is to prove that these two constructions yield strongly connected digraphs whenever applied
to connected graphs.

All the graphs and digraphs in this note are considered simple. More precisely,
we define a {\it digraph} to be a pair $(\V, \D)$ in which $\V$ is a finite non-empty collection of things called {\it vertices} 
and $\D$ is a collection of ordered pairs of distinct vertices.  An element $(u,v)$ of $\D$ will be called a {\it dart} with {\it initial}
vertex $u$ and {\it terminal} vertex $v$.
A {\it $2$-dart} of a digraph $(\V, \D)$ is a pair $(x,y)$ of darts in $\D$ such that the terminal vertex of $x$ coincides
  with the initial vertex of $y$ while the initial vertex of $x$ does not coincide with the terminal vertex of $y$.

If for every dart $(u,v)$ of a digraph $\Lambda$ also its reverse $(u,v)^{-1} = (v,u)$ is a dart, then $\Lambda$ is called a {\it graph}.
In this case, we call the pair $\{(u,v), (v, u)\}$ of darts an {\it edge} of $\Lambda$.

 We are now ready to define the {\it dart digraph} and the {\it squared dart digraph} of a given graph $\Lambda$ with the set of darts $\D$.

The {\it dart digraph of $\Lambda$} is the digraph  $\Dart(\Lambda)$
 with vertices and darts being the darts and $2$-darts of $\Lambda$, respectively. 

Similarly, let the {\it squared dart digraph} of $\Lambda$ be the digraph
$\dAAG(\Lambda)$  with vertex-set $\D\times\D$ and with a pair $\bigl( (x,y),(z,w) \bigr)$, $x,y,z,w\in \D$, being a dart of
$\dAAG(\Lambda)$ if and only if $y=z$ and $(x,w)$ is a $2$-dart of $\Lambda$.

Recall that a digraph is said to be {\it strongly connected}   provided that for any two vertices $u, v$, 
there is a directed path from $u$ to $v$ (we then say that $v$ is {\it accessible} from $u$), 
as well as one from $v$ to $u$. 

\section{Results}
\label{sec:AAG}

The first of our results is a simple observation about bipartiteness of the dart digraph and the squared dart digraph.
(A digraph  is said to be {\it bipartite} if its underlying graph is bipartite.)

\begin{lemma}
\label{lem:bip}
If $\Lambda$ is a bipartite graph, then $\Dart(\Lambda)$ and $\dAAG(\Lambda)$ are also bipartite.
\end{lemma}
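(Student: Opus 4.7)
The plan is to lift the bipartition of $\Lambda$ to both $\Dart(\Lambda)$ and $\dAAG(\Lambda)$ by means of a simple sign function on darts.

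First, fix a bipartition $V(\Lambda) = A \sqcup B$ of $\Lambda$ and define $f \colon \D \to \{0,1\}$ by setting $f(u,v) = 0$ if $u \in A$ and $f(u,v) = 1$ if $u \in B$. The key observation, which will drive the entire proof, is that for every $2$-dart $(x,y)$ of $\Lambda$ one has $f(x) \neq f(y)$: writing $x = (u,v)$ and $y = (v,w)$, the value $f(x)$ is determined by the part of $V(\Lambda)$ containing $u$, while $f(y)$ is determined by the part containing $v$, and bipartiteness forces $u$ and $v$ to lie in opposite parts.

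For $\Dart(\Lambda)$, whose vertex set is $\D$ and whose darts are precisely the $2$-darts of $\Lambda$, the partition $\D = f^{-1}(0) \cup f^{-1}(1)$ is then immediately a bipartition. For $\dAAG(\Lambda)$ the vertex set is $\D \times \D$, so I extend $f$ to $F \colon \D \times \D \to \{0,1\}$ by $F(x,y) = f(x) + f(y) \pmod 2$. Every dart of $\dAAG(\Lambda)$ has the form $(x,y) \to (y,w)$ with $(x,w)$ a $2$-dart of $\Lambda$, and one computes $F(x,y) + F(y,w) \equiv f(x) + f(w) \pmod 2$, which equals $1$ by the key observation applied to the $2$-dart $(x,w)$. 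Hence $F$-values differ across every dart, yielding the required bipartition of $\dAAG(\Lambda)$.

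I expect no genuine obstacle: the whole argument reduces to the single fact that the two endpoints of any edge of a bipartite graph lie in opposite parts, and the only mildly subtle point is choosing the right sign function for $\dAAG(\Lambda)$, where summing the signs of the two coordinates (rather than using only the first) is what makes the $2$-dart condition on $(x,w)$ flip the value of $F$.
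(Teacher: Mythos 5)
Your proof is correct and follows essentially the same route as the paper: colour each dart by its initial vertex to handle $\Dart(\Lambda)$, and colour a vertex $(x,y)$ of $\dAAG(\Lambda)$ by the parity $f(x)+f(y)$ (the paper's blue/red = same/different colours is exactly this XOR). You merely spell out the verification that the paper dismisses as ``clearly''.
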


\begin{proof}
Colour the vertices of $\Lambda$ properly black and white and let 
   a dart of $\Lambda$ inherit the colour of its initial vertex;
   this then determines a proper colouring of the vertices of $\Dart(\Lambda)$; in particular, $\Dart(\Lambda)$ is bipartite.
   
    Further, colour
   a vertex $(x,y)$ of $\dAAG(\Lambda)$ {\it blue} if the darts $x,y$ of $\Lambda$ are of the same colour
   as vertices in $\Dart(\Lambda)$
    (either black or white),
   and {\it red} otherwise. This is then clearly a proper colouring of the vertices of $\dAAG(\Lambda)$.
\end{proof}

We will now introduce a few auxiliary notions needed to analyse connectedness of the dart digraph and the square dart digraph.

An {\it $s$-arc} in a graph $\Lambda$ is a walk of length $s$ in which no two of any three consecutive vertices are the same;
alternatively, it is a sequence of darts in $\Lambda$ such that any two consecutive darts form a $2$-dart.

An {\it arc-cycle} is a closed walk which is also an $s$-arc for some $s$, and in addition, if it begins with $(a,b)$ and
ends with $(c, a)$, then $c$ is required to be different from $b$.  Note that any cyclic shift of an arc-cycle is also an arc-cycle.
Observe that an $s$-arc in $\Lambda$ corresponds to a directed walk in $\Dart(\Lambda)$ of length $s-1$, 
and an arc-cycle $\Lambda$ of length $s$ corresponds to a directed closed walk of length $s$ in $\Dart(\Lambda)$.

An $s$-arc, written as a sequence  $[a_0, a_1, a_2, \dots, a_{s-1},a_s]$ of vertices,  is a {\it balloon}
 if $a_0,a_1, \dots, a_{s-1}$ are pairwise distinct and $a_s = a_i$ for some $i\in\{1,2,\ldots, s-3\}$.
The arc $(a_0,a_1)$ is then called the {\it beginning} of the balloon.

\begin{lemma}
\label{lem:balloon}
Let $\Lambda$ be a graph in which every vertex has valence at least $3$ and let $(u,v)$ be a dart of $\Lambda$.
Then  $(u,v)$ is the beginning of some balloon in $\Lambda$.
\end{lemma}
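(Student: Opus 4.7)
The plan is to construct the balloon greedily, extending the arc one dart at a time and exploiting the hypothesis that every vertex has valence at least $3$ to guarantee enough freedom at each step.

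Start with $a_0 = u$, $a_1 = v$. Inductively, suppose we have built a sequence $[a_0, a_1, \ldots, a_k]$ of pairwise distinct vertices forming a $k$-arc. At the vertex $a_k$, the valence hypothesis guarantees at least three neighbours, and since $\Lambda$ is simple, at least two of these neighbours are different from $a_{k-1}$; call these the \emph{admissible} continuations of the arc. If any admissible continuation lies outside $\{a_0, \ldots, a_k\}$, choose one such vertex as $a_{k+1}$ and continue. Since $\Lambda$ is finite, this process must terminate at some step $k$ at which every admissible continuation lies in $\{a_0, \ldots, a_k\}$; and as no admissible continuation can equal $a_{k-1}$ or $a_k$ (by definition and simplicity), all admissible continuations in fact lie in $\{a_0, a_1, \ldots, a_{k-2}\}$.

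At this terminating step, set $s := k+1$ and choose an admissible continuation as $a_s$; the candidate balloon is $[a_0, a_1, \ldots, a_{k}, a_s]$ with $a_s = a_i$ for some $i \in \{0, 1, \ldots, k-2\}$. To meet the balloon definition we must ensure $i \in \{1, 2, \ldots, s-3\} = \{1, \ldots, k-2\}$, i.e.\ $a_s \neq a_0$ and $k \geq 3$. The latter holds because the early values $k=1,2$ cannot cause termination: at $k=1$ the forbidden set $\{a_0,\ldots,a_{k-2}\}$ is empty, and at $k=2$ it has a single element, while there are at least two admissible continuations and simplicity forbids two of them coinciding. The former, $a_s \neq a_0$, is handled by using the \emph{choice} among the admissible continuations: since simplicity allows at most one admissible continuation to equal $a_0$, and there are at least two of them, at least one lies in $\{a_1, \ldots, a_{k-2}\}$, and we pick such a vertex as $a_s$.

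The main obstacle is precisely this last point: the balloon definition excludes both short ``digon-like'' closures ($a_s = a_{s-1}$ or $a_{s-2}$, automatic from simplicity and the arc condition) and the case $a_s = a_0$ of closing into a full cycle through the beginning dart. The role of the valence-$3$ assumption is not only to let the arc be extended (valence $\geq 2$ would suffice for that), but to supply a \emph{second} admissible continuation so that the closure $a_s$ can be steered away from $a_0$ when $a_0$ happens to be a neighbour of $a_k$. This is the only nontrivial ingredient; everything else is a finiteness argument.
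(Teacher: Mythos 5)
Your proof is correct, but it follows a genuinely different route from the one in the paper. The paper's proof deletes the vertex $u$, notes that the component $\Lambda'$ of $v$ in the resulting graph still has minimum valence at least $2$ and hence is not a tree, extracts a cycle $C$ from $\Lambda'$, and assembles the balloon from the dart $(u,v)$, a path from $v$ to $C$, and the cycle $C$; removing $u$ at the outset is what guarantees that the closing vertex of the balloon is not $a_0=u$. You instead grow a simple path greedily from $(u,v)$ and close it at the first moment when every admissible continuation points back into the path, and you use the \emph{second} admissible continuation supplied by the valence-$3$ hypothesis to steer the closure away from $a_0$. Both arguments exploit valence at least $3$ for the same two purposes (keeping the walk extendable and avoiding a closure at $u$), but yours is more self-contained, since it does not invoke the fact that a connected graph of minimum valence $2$ contains a cycle; the price is a slightly more delicate endgame, namely the check that termination cannot occur before $k=3$ and that some admissible continuation lands in $\{a_1,\dots,a_{k-2}\}$, both of which you verify correctly. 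I find no gap in your argument.
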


\begin{proof}
Let  $\Lambda'$ be the connected component containing $v$ of the graph 
obtained from $\Lambda$ by removing the vertex $u$ and all of the edges incident to $u$.
$\Lambda'$ is not a tree. Hence $\Lambda'$  contains a cycle, say $C=a_0a_1 \ldots a_k$ with $a_k = a_0$.
Let $v v_1\ldots v_m$ be a path from $v$ to $C$ in $\Lambda'$.
 Without loss of generality we may assume that $v_m=a_0$.
Then $[u,v,v_1,\ldots, v_m, a_1, a_2,\ldots, a_k]$ is a baloon in $\Lambda$ starting with $(u,v)$.
\end{proof}

\begin{lemma}
\label{lem:GCD}
Let $\Lambda$ be a graph in which every vertex has valence at least 3. Then
the greatest common divisor of the lengths of all arc-cycles in $\Lambda$ is at most 2.
\end{lemma}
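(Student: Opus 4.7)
My plan is to apply Lemma~\ref{lem:balloon} to produce a first arc-cycle, and then exploit the minimum-valence assumption to generate a second arc-cycle whose length combines with the first's to force the GCD $d$ of all arc-cycle lengths to divide $2$.

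First I apply Lemma~\ref{lem:balloon} to any dart $(u,v)$ of $\Lambda$ to obtain a balloon $B = [a_0, a_1, \ldots, a_{s-1}, a_s]$ with $a_s = a_i$. The loop $C := [a_i, a_{i+1}, \ldots, a_{s-1}, a_i]$ of this balloon is an arc-cycle of length $L := s-i$, so $d \mid L$.

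I then split into two cases. In the ``shortcut'' case, some loop vertex $a_j$ has a neighbor already in $V(B)$ that is neither a loop-neighbor nor a stem-neighbor of $a_j$. Such a shortcut either chords $C$ into two arc-cycles of lengths $\alpha+1$ and $(L-\alpha)+1$ (total $L+2$), or connects $C$ to the stem and yields an analogous triple of arc-cycles; since $d$ divides both $L$ and the relevant excess, $d \mid 2$. In the ``generic'' case the loop is induced in $\Lambda$ and has no edges to the stem, so by the min-valence hypothesis some loop vertex has a neighbor $b \notin V(B)$. Applying Lemma~\ref{lem:balloon} to the corresponding dart yields a new balloon $B'$. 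If $B'$ eventually meets $B$ again, a theta-like subgraph appears whose simple cycles and figure-eight arc-cycles give enough length relations to force $d \mid 2$; otherwise, I iterate the construction with $B \cup B'$ in place of $B$. Since $\Lambda$ is finite, the iteration must terminate in a configuration of the first kind.

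The main obstacle will be the theta-graph analysis. Three internally disjoint paths of lengths $p,q,r$ between two vertices produce simple cycles of lengths $p+q$, $p+r$, $q+r$, and by traversing two of them ``figure-eight'' fashion (which is a legitimate arc-cycle because there is no backtrack at the two common-vertex crossings) one also obtains arc-cycles of lengths $2p+q+r$, $p+2q+r$, and $p+q+2r$. Together these directly force only $d \mid 2\gcd(p,q,r)$, not yet $d \mid 2$. To close this gap one invokes the min-valence condition at an internal vertex of one of the three paths: it supplies an additional edge which, combined with subpaths of the theta, contributes a further cycle whose length is coprime modulo~$2$ to $\gcd(p,q,r)$, refining the GCD of the relevant arc-cycle lengths down to at most~$2$.
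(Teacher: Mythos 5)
Your proposal does not close the case that actually matters, and one of the subcases you do claim to close is wrong as stated. The chord subcase is fine: two cycles of total length $L+2$ together with $d\mid L$ give $d\mid 2$. But the ``connects $C$ to the stem'' subcase is not analogous: an edge from a loop vertex $a_j$ to a stem vertex $a_m$ creates a theta subgraph with three paths of lengths $p,q,r$ where $p+q=L$ and $r=i-m+1\ge 2$, and the three cycle lengths $p+q$, $p+r$, $q+r$ only force $d\mid 2r$, not $d\mid 2$. This is exactly the theta situation you later concede yields only $d \mid 2\gcd(p,q,r)$. That concession is the real gap: your final sentence --- that the valence condition at an internal vertex supplies a further cycle ``whose length is coprime modulo $2$ to $\gcd(p,q,r)$'' --- is not a well-defined claim and is backed by no construction. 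The extra edge at an internal vertex need not stay inside the theta at all; it can launch a new balloon into fresh territory, so you are back to iterating with no control over the lengths produced. Note also that your figure-eight arc-cycles contribute nothing: $2p+q+r=(p+q)+(p+r)$ is already a sum of simple cycle lengths, so no new divisibility information arises from them.

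The missing idea is to use a \emph{non-simple} arc-cycle that can be rerouted around a cycle. The paper fixes a cycle $C$ of length $m$ and an edge $\{u,v\}$ of $C$; since $u$ and $v$ have valence at least $3$ there are darts $(u,a)$ and $(v,b)$ leaving $C$, and Lemma~\ref{lem:balloon} gives balloons starting with them. Traversing the first balloon out and back, stepping from $u$ to $v$, traversing the second balloon out and back, and stepping from $v$ back to $u$ is an arc-cycle of some length $n$ (the balloons exist precisely to make the walk non-backtracking at $u$ and at $v$). Replacing the final step $(v,u)$ by the path $C-\{u,v\}$ gives a second arc-cycle of length $m+n-2$. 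Together with $C$ itself this yields $d\mid\gcd(m,\,n,\,m+n-2)\mid 2$ in one stroke, with no case analysis and no iteration. If you want to salvage your outline, this ``cross the edge versus go around the cycle'' substitution, which changes an arc-cycle's length by exactly $m-2\equiv -2\pmod d$, is the step you must add; relations among simple cycle lengths of a theta subgraph alone cannot do the job.
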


\begin{proof}
Let $C$ be a cycle in $\Lambda$, let $m$ be its length, let $uv$ be an edge of $C$, let $a$ be a neighbour of $u$ other than its neighbours in the cycle $C$, and let $b$ be that for $v$.  Let $\alpha, \beta$ be  balloons beginning with $(u,a)$ and $(v, b)$, respectively.  Then the walk beginning at $u$, following $\alpha$ out to and around  its cycle and back to $u$
along the initial part of $\alpha$, then in one step to $v$, then following $\beta$ out to and around  its cycle and back to $v$ following the initial part of $\beta$, then finally from $v$ back in one step to $u$ is an arc-cycle $\gamma$ of some length $n$.   Replacing that last step from $v$ to $u$ by   the path formed  from $C$  by removing the edge  $\{u,v\}$ gives an arc-cycle of length $m+n-2$.  As the greatest common divisor of $m, n$, and $m+n-2$ is at most 2, the  result follows.
\end{proof}

\begin{theorem}
\label{th:strong}
If $\Lambda$ is a connected simple graph in which every vertex has valence at least 3,  then $\Dart(\Lambda)$ and $\dAAG(\Lambda)$ are strongly connected.
\end{theorem}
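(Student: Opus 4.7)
The plan is to prove strong connectivity of $\Dart(\Lambda)$ first, and then deduce strong connectivity of $\dAAG(\Lambda)$ using it together with Lemma~\ref{lem:GCD}.

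\emph{Strong connectivity of $\Dart(\Lambda)$.} The key step is to show that for every dart $(u,v)$ of $\Lambda$, the reverse $(v,u)$ is accessible from $(u,v)$ in $\Dart(\Lambda)$. Apply Lemma~\ref{lem:balloon} to obtain a balloon $[a_0,a_1,\ldots,a_{s-1},a_s]$ with $a_0=u$, $a_1=v$, and $a_s=a_i$ for some $i\in\{1,\ldots,s-3\}$; as a directed walk in $\Dart(\Lambda)$ it ends at $(a_{s-1},a_i)$. Distinctness of $a_0,\ldots,a_{s-1}$ lets us continue from there along the reversed tail of the balloon, $(a_i,a_{i-1}),(a_{i-1},a_{i-2}),\ldots,(a_1,a_0)=(v,u)$, each transition being a legitimate $2$-dart for the same reason. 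Once reversal is established, a short induction on distance in $\Lambda$ from $v$ shows that every dart lies in the set of darts accessible from $(u,v)$: whenever $(x',x)$ is accessible, so is $(x,z)$ for every neighbour $z$ of $x$---directly if $z\neq x'$, and via a subsequent reversal of $(x',x)$ otherwise. Connectedness of $\Lambda$ then yields full strong connectivity.

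\emph{Reduction for $\dAAG(\Lambda)$.} A directed walk of length $n$ in $\dAAG(\Lambda)$ starting at $(x_0,y_0)$ and ending at $(x^*,y^*)$ unpacks to a sequence of darts $b_{-1}=x_0,b_0=y_0,b_1,\ldots,b_{n-1}=x^*,b_n=y^*$ of $\Lambda$ with $(b_{k-1},b_{k+1})$ a $2$-dart of $\Lambda$ for every $0\leq k\leq n-1$. Equivalently, the odd-indexed subsequence $b_{-1},b_1,b_3,\ldots$ and the even-indexed subsequence $b_0,b_2,b_4,\ldots$ form two directed walks in $\Dart(\Lambda)$ which can be chosen independently. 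Reaching $(x^*,y^*)$ therefore reduces to finding, for some $m\geq 0$, a pair of walks in $\Dart(\Lambda)$ of matching lengths: either of length $m$ from $x_0$ to $x^*$ and from $y_0$ to $y^*$ when $n=2m$, or of length $m+1$ from $x_0$ to $y^*$ and of length $m$ from $y_0$ to $x^*$ when $n=2m+1$.

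\emph{Finishing via Lemma~\ref{lem:GCD}.} By the first stage $\Dart(\Lambda)$ is strongly connected, and Lemma~\ref{lem:GCD} bounds its period by $2$ (every arc-cycle in $\Lambda$ is a closed walk in $\Dart(\Lambda)$, so the period divides the gcd of arc-cycle lengths). If the period equals $1$, then between any two vertices of $\Dart(\Lambda)$ walks of all sufficiently large lengths exist, so a large common $m$ settles the even case $n=2m$. If the period equals $2$, then $\Lambda$ is bipartite, $\dAAG(\Lambda)$ is bipartite by Lemma~\ref{lem:bip}, and any walk in $\Dart(\Lambda)$ between two fixed darts has length of fixed parity determined by the bipartition. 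A direct parity calculation then shows that the two bipartition classes of $\dAAG(\Lambda)$ correspond precisely to the even case $n=2m$ and the odd case $n=2m+1$, with the required walks of the right parities existing for all sufficiently large $m$. I expect this parity bookkeeping in the bipartite case to be the main obstacle; the remainder of the argument is a routine combination of the preceding lemmas.
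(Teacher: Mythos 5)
Your proof is correct and follows essentially the same route as the paper: a balloon-based reversal argument plus connectedness of $\Lambda$ gives strong connectivity of $\Dart(\Lambda)$, and then two walks in $\Dart(\Lambda)$ of lengths matched via Lemma~\ref{lem:GCD} (with parity bookkeeping in the bipartite case) are interleaved into a walk in $\dAAG(\Lambda)$. The only cosmetic differences are that you propagate accessibility by induction on distance rather than via a shortest path between two edges, and that you reach the opposite bipartition class directly through the odd case $n=2m+1$ where the paper instead reduces to blue-to-blue accessibility.
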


\begin{proof}
Let $\Delta = \Dart(\Lambda)$.
We  begin the proof of the strong connectivity of $\Delta$  by proving two claims:
\smallskip

{\sc Claim 1}:  {\it Let $x=(u,v)$ be a dart of $\Lambda$ and let $x^{-1}=(v,u)$ be its inverse dart.
Then there exists a directed walk from $x$ to $x^{-1}$ in $\Delta$.}
\smallskip

\noindent
Indeed:  By Lemma~\ref{lem:balloon}, there exists a balloon 
$\alpha = [a_0, a_1, a_2, \dots, a_{s-1},a_s]$ in $\Lambda$, beginning with $x$ (that is, $a_0=u$ and $a_1=v$). 
Let $i \in \{1, \ldots, s-2\}$ be such that $a_s = a_i$.
Then $\beta = [a_0, a_1, a_2, \dots, a_{s-1}, a_s = a_i, a_{i-1}, a_{i-2}, \dots, a_2, a_1, a_0]$ is an $(s+i)$-arc in $\Lambda$,
yielding a directed walk 
from $x$ to $x^{-1}$ in $\Delta$. This proves Claim 1.
\smallskip

{\sc Claim 2}: {\it If $e$ and $f$ are two edges in $\Lambda$, then there exists a directed walk in $\Delta$ from  some $x$ to some $y$
such that the underlying edges of $x$ and $y$ are $e$ and $f$, respectively.}
\smallskip

\noindent 
To prove this, consider a shortest path $va_1a_2\ldots a_kw$ from $e$ to $f$. Then $e=\{u,v\}$ and $f=\{w,z\}$ for some
vertices $u$ and $z$ of $\Lambda$ such that  $a_1 \not = u$ and $a_k\not = z$. But then
$(u,v)\, (v,a_1)\, (a_1,a_2)\, \ldots\, (a_{k-1},a_k)\, (a_k,w)\, (w,z)$
is a directed walk in $\Delta$  from $x = (u,v)$ to $y =  (w,z)$, underlying $e$ and $f$ respectively.  This proves Claim 2.
\smallskip

Note that strong connectivity of $\Delta$ now follows directly from Claims 1 and 2. Namely, if
$x$ and $y$ are two vertices in $\Delta$ (and thus darts in $\Lambda$), then Claim 2 implies existence
of a directed walk in $\Delta$ from either $x$ or $x^{-1}$ to either $y$ or $y^{-1}$. By inserting directed walks
(the existence of which is implied by Claim 1) 
from $x$ to $x^{-1}$ and $y^{-1}$ to $y$, if necessary, one obtains a directed walk in $\Delta$
from $x$ to $y$. 
\smallskip

Now we are ready to prove that $\dAAG(\Lambda)$ is strongly connected.  Let $(x,y)$ and $(w,z)$ be any two vertices in $\dAAG(\Lambda)$.
Then $x,y,w$ and $z$ are darts of $\Lambda$ and hence vertices of $\Delta$. Since $\Delta$ is strongly connected,
 there are directed walks from $x$ to $w$ and from $y$ to $z$,
  and moreover, we may choose these two walks  so that each passes   through every vertex of $\Delta$

By Lemma~\ref{lem:GCD},
the greatest common divisor $D$ of the lengths of all arc-cycles in $\Lambda$ is at most $2$.  Thus, by inserting arc-cycles appropriately, we can cause the length of the two walks to differ by   at most 1.   Let these  walks    be $\alpha = [x=a_0, a_1, \dots, a_k = w]$ and $\beta = [y=b_0, b_1, \dots, b_\ell = z]$ where $|k-\ell | \le 1$.
  Here, each $a_i$ and $b_i$ is a dart in $\Lambda$ and each $(a_i, a_{i+1})$ and $(b_i, b_{i+1})$ is a 2-dart. 

If $\Lambda$ is not bipartite, then  $D = 1$, and we can force $k$ to be equal to $\ell$.   Then the sequence 
$$(a_0, b_0), (b_0, a_1), (a_1, b_1), (b_1, a_2), \dots, (a_k, b_k)$$
 is a directed walk of length $2k$ from $(x, y)$ to $(w, z)$.

Now suppose that $\Lambda$ is bipartite.   Recall that (see Lemma~\ref{lem:bip}) that then also vertices $\dAAG(\Lambda)$ can be
properly bi-coloured blue and red where a vertex $(x,y)$ is coloured blue whenever the initial vertices of $x$ and $y$ are at even distance
in $\Lambda$. Since every vertex in $\dAAG(\Lambda)$ has positive in- and out-valence,
to prove that $\dAAG(\Lambda)$ is strongly connected it suffices to show that every blue vertex is 
accessible from any other blue vertex, hence we may assume that the vertices $(x,y)$ and $(w,z)$ are blue.
But then the directed walks $\alpha$ and $\beta$ from $x$ to $w$ and from $y$ to $z$ must have the same parity.    Thus, even though $D = 2$, we can again force $k=\ell$, yielding a directed walk of length $2k$ from $(x, y)$ to $(w, z)$, as above.
\end{proof}

\end{document}